\numberwithin{equation}{subsection} \theoremstyle{plain}
\newtheorem{theorem}[subsection]{Theorem}
\newtheorem{proposition}[subsection]{Proposition}
\newtheorem{lemma}[subsection]{Lemma}
\newtheorem{problem}[subsection]{Problem}
\newcommand\R{\mathbf{R}}
\newcommand\A{\mathbf{Aff}}
\newcommand\rk{\operatorname{rk}}
\newcommand\F{\mathbf{F}}
\newcommand\Q{\mathbf{Q}}
\newcommand\eps{\varepsilon}
\newcommand\id{\operatorname{id}}
\begin{document}

\title{On conjugacy growth for solvable groups}
\subjclass[2000]{20F16 (Primary), 20F69, 20G25 (Secondary)}
\author{Emmanuel Breuillard, Yves de Cornulier}
\address{Laboratoire de Math\'ematiques\\
B\^atiment 425, Universit\'e Paris Sud 11\\
91405 Orsay\\
FRANCE}
\email{emmanuel.breuillard@math.u-psud.fr}

% 20F16    	Solvable groups, supersolvable groups
% 20F69    	Asymptotic properties of groups
% 20G25  Linear algebraic groups over local fields and their integers

\address{IRMAR, Campus de Beaulieu, 35042 Rennes CEDEX, France}
\email{yves.decornulier@univ-rennes1.fr}
\date{September 12, 2010}

\begin{abstract}
We prove that a finitely generated solvable group which is not virtually nilpotent has exponential conjugacy growth.
\end{abstract}

\maketitle

This note is an addendum to the paper ``On the conjugacy growth functions of groups" by V.~Guba and M.~Sapir \cite{guba-sapir}.

\section{Introduction}

Let $G$ be a finitely generated group. Our goal is to provide a proof of the following result

\begin{theorem}\label{solvable} If $G$ is virtually solvable and not virtually nilpotent then $G$ has exponential conjugacy growth. In fact the exponential conjugacy growth rate can be bounded below by a positive number independent of the generating set.
\end{theorem}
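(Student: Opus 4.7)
The strategy is twofold: a structural reduction to a concrete metabelian model group, followed by an elementary counting argument in the model that can be made uniform in the generating set.

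For the reduction, I would rely on the classical theory of finitely generated solvable groups (cf.\ Milnor, Wolf, Mal'cev, Rosenblatt): a finitely generated solvable non-virtually-nilpotent group $G$ has a metabelian subquotient of the form $M \rtimes \Z$, where $M$ is a finitely generated $\Z[t, t^{-1}]$-module on which the generator of $\Z$ acts with some eigenvalue of absolute value strictly greater than $1$ in a suitable completion of $\Q$ (archimedean or $p$-adic). For concreteness I would focus on the prototype $\Z[1/p] \rtimes_p \Z$; the general case is a technical variant with $\Z[1/p]$ replaced by an appropriate completion of $M$.

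In the prototype, write elements as $(v, n)$ with multiplication $(a, b)(c, d) = (a + p^b c,\, b + d)$. A direct computation shows that conjugation by $(w, k)$ sends $(v, n) \mapsto (p^k v + (1 - p^n) w,\, n)$, so at each height $n \geq 1$ the conjugacy classes at that height are in bijection with $p^{\Z}$-orbits on $\Z[1/p] / (1 - p^n)\Z[1/p] \cong \Z / (p^n - 1)\Z$. Since $p$ has exact order $n$ modulo $p^n - 1$, these orbits have size at most $n$, giving at least $(p^n - 1)/n$ conjugacy classes at height $n$. To realize them by short words, set $u = (1, 0)$ and $t = (0, 1)$: the $p^n$ elements $w_{\mathbf{a}} = u^{a_0} t u^{a_1} t \cdots u^{a_{n-1}} t$ with digits $a_i \in \{0, \ldots, p-1\}$ realize all $(v, n)$ with $0 \leq v \leq p^n - 1$, each of word length at most $pn$ in $\{u^{\pm 1}, t^{\pm 1}\}$. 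This yields an exponential conjugacy growth rate $\geq (\log p)/p > 0$ for the prototype.

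To transfer the bound to $G$ with a rate independent of the generating set, one uses that conjugacy growth is inherited from quotients (distinct classes lift and lengths decrease under projection) and from finite-index subgroups with a multiplicative loss depending only on the index, via a Reidemeister--Schreier argument. Uniformity in the generating set $S$ of $G$ then follows from a uniform version of the structure theorem, in the spirit of the Osin and Breuillard proofs of uniform exponential growth for solvable groups: there should be a constant $L = L(G)$, independent of $S$, such that some pair of $S$-words of length at most $L$ generates a subgroup mapping onto (or embedding with controlled distortion into) the model section $\Z[1/p] \rtimes_p \Z$ of $G$. Feeding this into the counting above gives a lower bound of order $(\log p)/(pL)$ for the conjugacy growth rate, uniformly in $S$. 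The main obstacle is precisely this uniformity---controlling the prime $p$, the length $L$, and the relevant distortion constants by quantities depending only on $G$ and not on the generating set. The counting in the model is elementary and essentially classical; it is the uniform structural reduction that does the real work.
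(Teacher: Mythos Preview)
Your computation in the prototype $\Z[1/p]\rtimes_p\Z$ is correct and pleasant, but the proposal has two genuine gaps.

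First, the reduction. Not every finitely generated solvable non-virtually-nilpotent group has $\Z[1/p]\rtimes_p\Z$ as a subquotient: any polycyclic example, say $\Z^2\rtimes_A\Z$ with $A\in\SL_2(\Z)$ hyperbolic, has only polycyclic subquotients. Your parenthetical ``technical variant'' therefore hides real content. In the general $M\rtimes\Z$ the quotient $M/(\phi^n-1)M$ need not be cyclic, and controlling its size and the orbit structure requires precisely the kind of arithmetic input (valuations on a global field, the product formula) that the paper invokes. The paper's reduction is different and more robust: via Groves's theorem, refined so that $K$ can be taken to be a global field, one lands once and for all in an arbitrary non-virtually-nilpotent subgroup of $\A(K)$, with no further passage to a single prototype.

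Second, and more seriously, uniformity in the generating set. You correctly flag this as the main obstacle, but the fix you propose---locating a pair playing the role of $u,t$ as $S$-words of length $\le L(G)$---is not something the Osin/Breuillard machinery delivers; those theorems produce a free subsemigroup in bounded depth, not a surjection onto a prescribed quotient, and simple examples (take generators at large height in $\Z[1/p]\rtimes_p\Z$) show that your height-$n$ counting does not directly give a rate independent of the generating set. The paper's mechanism is quite different and sidesteps this entirely. It fixes the quotient in $\A(K)$ once (depending only on $G$), then feeds in the universal lower bound $|\Sigma^n|\ge(1+\eps)^n$ from uniform exponential growth. Since the projection to $K^\times$ has polynomial-size image, pigeonhole yields exponentially many unipotent elements in $\Sigma^n$; the product formula over the places of $K$ then shows that each unipotent conjugacy class meets $\Sigma^n$ in only $O_\Sigma(n^r)$ points. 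All the $\Sigma$-dependent constants are polynomial and disappear from the exponential rate, so the conclusion $c_{G,\Sigma}(n)\ge(1+\eps/2)^{n/2}$ for large $n$ is automatically uniform in $\Sigma$. In short: rather than trying to realise a fixed model inside $\Sigma^L$, the paper counts crudely but uniformly and lets polynomial corrections be absorbed.
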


In fact, using different techniques, Michael Hull \cite{hull} independently proved that non-virtually-nilpotent polycyclic groups have exponential conjugacy growth.

Recall that a group $G$ generated by a finite symmetric set $\Sigma$ containing $1_G$ is said to have \emph{exponential conjugacy growth}, if $$C_{G,\Sigma}:=\liminf_{n \rightarrow \infty} \frac{1}{n} \log c_{G,\Sigma}(n) >0,$$
where $c_{G,\Sigma}(n)$ is the number of conjugacy classes of $G$ intersecting the $n$-ball $\Sigma^n$ nontrivially. It is easy to see that although $C_{G,\Sigma}$ may depend on the choice of $\Sigma$, the fact that it is positive does not. If the $C_{\Sigma,G}$ admit a positive
lower bound independent of the choice of $\Sigma$ in $G$, we say that $G$ has \emph{uniform exponential conjugacy growth}. Thus Theorem \ref{solvable} can be reformulated as saying that finitely generated non-virtually-nilpotent solvable groups have uniform exponential conjugacy growth.

Recall that D.~Osin proved in \cite{osin} that finitely generated solvable groups have uniform exponential word growth unless they are virtually nilpotent. This means that the rate of exponential word growth can be bounded below by a number independent of the choice of a generating set in $G$. Theorem \ref{solvable} is thus a strengthening of Osin's result. However we will use the fact that solvable groups have uniform exponential growth in our proof of Theorem \ref{solvable}.

In \cite{breuillard} the first-named author has given another proof of Osin's theorem which relied on a ping-pong argument and made key use of a theorem of J. Groves about metabelian images of solvable groups. This result of Groves will also be instrumental in the proof of Theorem \ref{solvable}.

Let us recall Groves' theorem. Let $\mathbf{Aff}$ be the algebraic group of affine transformations of the line $\{x\mapsto ax+b\}$, that is, if $K$ is any field,
\begin{equation*}
\mathbf{Aff}(K)=\left\{ u(a,b):=\left(
\begin{array}{ll}
a & b \\
0 & 1
\end{array}
\right) ,a\in K^{\times },b\in K\right\}
\end{equation*}

\begin{theorem}(Groves \cite{groves})\label{thm-groves}
Let $G$ be a finitely generated virtually solvable group which is not virtually nilpotent. Then there exists a field $K$ and a finite index subgroup $H$ of $G$ such that $H$ admits a homomorphism into the affine group $\A(K)$ whose image is not virtually nilpotent.
\end{theorem}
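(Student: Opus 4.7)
The plan is two-fold: first reduce to a finitely generated metabelian group that is still not virtually nilpotent, and then produce the homomorphism to $\A(K)$ by commutative algebra.

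Reduction step. After replacing $G$ by a finite-index solvable subgroup, examine the derived series $G \supseteq G^{(1)} \supseteq \cdots \supseteq G^{(d)} = 1$ and let $k \geq 2$ be the smallest integer with $G/G^{(k)}$ not virtually nilpotent (this exists because $G/G^{(1)}$ is abelian and $G = G/G^{(d)}$ is not virtually nilpotent). Replace $G$ by $G/G^{(k)}$ and set $A := G^{(k-1)}$; then $A$ is a nonzero normal abelian subgroup, $G/A$ is virtually nilpotent, and $G$ itself is not virtually nilpotent. A further reduction --- passing to a finite-index subgroup of $G$ and extracting a finitely generated abelian subquotient of $G/A$ whose action on $A$ witnesses the failure of nilpotence --- yields a finitely generated metabelian subgroup $M$ of finite index in $G$ that is not virtually nilpotent. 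The guiding principle is that if every finitely generated abelian subquotient of $G/A$ acted ``virtually unipotently'' on $A$, then $G$ itself would be virtually nilpotent.

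Module step. Write $M$ as $1 \to A \to M \to Q \to 1$ with $Q$ finitely generated abelian. By P.~Hall's theorem on finitely generated metabelian groups, $A$ is finitely generated as a module over $R := \Z[Q]$. Let $\overline{R} := R/\mathrm{Ann}_R(A)$; this is a finitely generated commutative $\Z$-algebra, hence a Noetherian Jacobson ring whose residue fields at maximal ideals are finite (general Nullstellensatz). A homomorphism $M \to \A(K) = K \rtimes K^\times$ with non-virtually-nilpotent image corresponds to a character $\chi : Q \to K^\times$ with infinite image together with a nonzero $R$-equivariant map $A \to K_\chi$, where $K_\chi$ denotes $K$ equipped with the $Q$-action through $\chi$. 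It therefore suffices to find a maximal ideal $\mathfrak{m}$ of $\overline{R}$ such that the image of $Q$ in $(\overline{R}/\mathfrak{m})^\times$ is infinite: the $Q$-equivariant surjection $A \to A/\mathfrak{m}A$ lands in a nonzero $\overline{R}/\mathfrak{m}$-vector space, from which one selects a nonzero linear functional. Conversely, if no such $\mathfrak{m}$ exists, then every $g \in Q$ is torsion in every residue field; Jacobsonness then gives some uniform $N$ with $g^N - 1$ in the nilradical of $\overline{R}$ for every generator $g$ of $Q$, forcing the action of $Q$ on $A$ to be virtually unipotent and making $M$ virtually nilpotent --- a contradiction.

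The main obstacle is the reduction step: isolating a finitely generated metabelian quotient of a finite-index subgroup of $G$ that remains non-virtually-nilpotent requires careful manipulation of the derived series and the observation that a nilpotent group acting unipotently on an abelian normal subgroup gives a nilpotent extension. The commutative-algebra step is essentially the Jacobson/Nullstellensatz argument familiar from the proof of the Tits alternative for linear solvable groups; the output field $K$ may be taken to be finite, a finite extension of $\Q$, or a local field, depending on the choice of maximal ideal $\mathfrak{m}$.
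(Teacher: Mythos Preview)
The paper does not give its own proof of this theorem; it simply quotes Groves and refers to \cite{groves} and \cite[Theorem~1.6]{breuillard}. So there is nothing to compare against, and I will just assess your sketch on its own.

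Your module step contains a genuine error. You correctly note that $\overline{R}=\Z[Q]/\mathrm{Ann}_R(A)$ is a finitely generated $\Z$-algebra and that its residue fields at maximal ideals are finite. But then the condition you seek --- a maximal ideal $\mathfrak{m}$ with the image of $Q$ infinite in $(\overline{R}/\mathfrak{m})^\times$ --- is \emph{never} satisfied, since a finite field has finite multiplicative group. Your contrapositive (``Jacobsonness then gives some uniform $N$ with $g^N-1$ in the nilradical'') fails for exactly this reason: take $Q=\Z$, $A=\Z[t,t^{-1}]$ with the generator acting as $t$, so $\overline{R}=\Z[t,t^{-1}]$; the image of $t$ is torsion in every residue field, yet the orders are unbounded and $t^N-1$ is never nilpotent. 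Consequently your argument, as written, never produces a non-virtually-nilpotent image in $\A(K)$, and the closing remark that $K$ ``may be taken to be finite'' is impossible for the same reason (by Lemma~\ref{vn} any subgroup of $\A(K)$ with $K$ finite is finite, hence virtually nilpotent).

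The fix is to work with \emph{prime} ideals rather than maximal ones: take $\mathfrak{p}$ a minimal prime over $\mathrm{Ann}_R(A)$ and set $K=\mathrm{Frac}(R/\mathfrak{p})$. There are only finitely many such $\mathfrak{p}$, so if the image of $Q$ were finite in $K^\times$ for each of them, a uniform $N$ would exist with $g^N-1\in\sqrt{\mathrm{Ann}_R(A)}$ for each generator $g$; Noetherianity then makes the ideal generated by these elements nilpotent, and the preimage of $NQ$ in $M$ is nilpotent, contradicting the hypothesis. Nakayama (applied after localizing at $\mathfrak{p}$) guarantees a nonzero $Q$-equivariant map $A\to K_\chi$.

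Separately, your reduction step is too thin. After arranging $A$ abelian with $G/A$ virtually nilpotent, passing to a finite-index subgroup only gives $N$ with $N/A$ nilpotent; extracting from this a \emph{metabelian} quotient that remains non-virtually-nilpotent is precisely the substantive part of Groves's argument, and your one-sentence ``guiding principle'' does not supply it. (Also, ``metabelian subgroup $M$ of finite index in $G$'' should read ``metabelian quotient of a finite-index subgroup''.)
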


For a proof of the above theorem, we refer the reader to Groves' original paper, which is based on prior work of Philip Hall, or to \cite[Theorem 1.6]{breuillard} for a self-contained proof. 

Groves's theorem will reduce the proof of Theorem \ref{solvable} to the case when $G$ is a subgroup of $\A(K)$ for some field $K$, which can be assumed to be finitely generated because $G$ is. 

Let us end this introduction by mentioning the following open problem (see also Problem \ref{open} below).

\begin{problem}Prove (or disprove) that any non-virtually-nilpotent finitely generated elementary amenable group has exponential conjugacy growth.
\end{problem}

By Chou \cite{chou}, such a group has exponential growth; however there is no analogue of Groves's theorem for those groups and accordingly our methods do not apply.
\vspace{11pt}

\section{Proof of the main theorem}
 
We now begin the proof of Theorem \ref{solvable} and we will complete it by the end of this section modulo certain auxiliary observations, which will be proven in the last two sections, the first of which being the following slight refinement of Groves's theorem.
 
Recall that a {\it global field} is a finite algebraic extension of either $\Q$ or $\F_p(t)$.

\begin{proposition}\label{global} In Groves's theorem \ref{thm-groves}, we can take $K$ to be a global field.
\end{proposition}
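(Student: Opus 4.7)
The plan has three steps: reduce to $K$ finitely generated, extract explicit witnesses of the non-virtual-nilpotence of $\phi(H)$ inside $\A(K)$, and then specialize to a global field while preserving the witnesses. Since $H$ is finitely generated (being of finite index in the finitely generated $G$), so is $\phi(H)$; fix a generating set $\gamma_i = u(a_i, b_i)$, $i = 1, \dots, s$. Replacing $K$ by the subfield generated by the $a_i^{\pm 1}$ and the $b_i$, we may assume $K$ is finitely generated over its prime field $\P$. Let $p: \A(K) \to K^\times$ be the projection $u(a,b) \mapsto a$ and $N = \ker p$ the translation subgroup. From $[u(a,b),u(c,d)] = u(1,(a-1)d-(c-1)b)$ we get $[\phi(H), \phi(H)] \subset N$, and a short iterated-commutator argument (using that $(\alpha-1)^k c \ne 0$ in $K$ whenever $\alpha \in K^\times$ has infinite order and $c \in K$ is nonzero) yields the criterion: a subgroup $\Gamma \le \A(K)$ is virtually nilpotent iff $p(\Gamma)$ is finite or $[\Gamma,\Gamma] = 1$. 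Applied to $\phi(H)$, this produces (i) an element $\alpha \in p(\phi(H))$ of infinite multiplicative order in $K^\times$, and (ii) a nonzero $c \in K$ with $u(1,c) \in [\phi(H),\phi(H)]$; explicitly, $c = (a_i-1)b_j-(a_j-1)b_i$ for a suitable pair $i,j$.

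Let $R = \P[a_1^{\pm 1}, \dots, a_s^{\pm 1}, b_1, \dots, b_s] \subset K$. It suffices to construct a ring homomorphism $\rho: R \to R'$, with $R'$ a subring of a global field $K'$, such that $\rho(\alpha)$ is not a root of unity in $K'$ and $\rho(c) \ne 0$: composing $\phi$ with the induced map $\A(R) \to \A(R')$ then yields $H \to \A(K')$ whose image inherits an element of infinite order in $(K')^\times$ together with a nontrivial translation, hence is not virtually nilpotent by the criterion. In characteristic $0$, view $R \otimes \Q$ as the coordinate ring of an irreducible affine $\Q$-variety $X$ and consider the morphism $f_\alpha: X \to \mathbb{A}^1 \setminus \{0\}$ defined by $\alpha$. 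When $\alpha$ is algebraic over $\Q$, any specialization $\rho: R \to \overline{\Q}$ sends $\alpha$ to a Galois conjugate, still a non-root-of-unity, and we only need to arrange $\rho(c) \ne 0$. When $\alpha$ is transcendental over $\Q$, $f_\alpha$ is dominant; by Chevalley's theorem its image on $\overline{\Q}$-points contains a cofinite subset $U$ of $\overline{\Q}^\times$, from which we pick $y \in U$ not a root of unity (e.g. $y$ a sufficiently large prime) and a $\overline{\Q}$-point $\rho$ in $f_\alpha^{-1}(y)$ with $\rho(c) \ne 0$ (a generic condition on the fiber). The image of $\rho$ lies in $\overline{\Q}$, hence in a number field. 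In positive characteristic, the analogous argument with target $\overline{\F_p(t)}$ produces $\rho$ with $\rho(\alpha)$ transcendental over $\F_p$, hence of infinite multiplicative order, the image lying in a finite extension of $\F_p(t)$---a global function field.

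The main obstacle is this specialization step: the condition ``$\rho(\alpha)$ is not a root of unity'' is a countable conjunction of polynomial conditions ($\rho(\alpha)^m \ne 1$ for every $m \ge 1$), which a naive hypersurface-avoidance argument cannot handle over a countable algebraically closed base (a countable union of points can already cover $\overline{\Q}$). The trick is to bundle this countable conjunction into a single Zariski-open condition on the target of the dominant morphism $f_\alpha$, via Chevalley's theorem on images.
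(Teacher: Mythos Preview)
Your reduction is the same as the paper's: pass to the finitely generated ring of matrix entries, extract a witness $\alpha\in K^\times$ of infinite order and a nonzero $c\in K$ with $u(1,c)$ in the image, and then specialize to a global field keeping both witnesses alive. Your criterion for virtual nilpotence is a mild reformulation of the paper's Lemma~\ref{vn} (for finitely generated $\Gamma$ the conditions $\Gamma\cap K=0$ and $[\Gamma,\Gamma]=1$ coincide once $p(\Gamma)$ is infinite), so up to this point the arguments are identical.

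The divergence is in the specialization step. The paper's Lemma~\ref{spero} proceeds via Noether normalization: write $A'=A\otimes_R K$ as an integral extension of a polynomial ring $B=K[t_1,\dots,t_n]$, with $K=\Q$ or $\F_p(x)$ built in from the start. The point is that $x$ has some fixed integral degree $k$ over $B$, and any $K$-specialization $B\to K$ forces $\phi(x)$ to have degree $\le k$ over $K$; by Lemma~\ref{fira} only finitely many roots of unity in $\hat K$ have degree $\le k$, so letting $\ell$ be the lcm of their orders the countable conjunction collapses to the single condition $\phi\big((x^\ell-1)y\big)\neq 0$, which Lemma~\ref{spede} handles. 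This is exactly the ``bundling'' you anticipate in your last paragraph, but achieved through a degree bound rather than through Chevalley.

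Your Chevalley approach also works in characteristic $0$, with one tightening: for a fixed $y$ in the cofinite image there is no guarantee the fibre $f_\alpha^{-1}(y)$ meets $\{c\neq 0\}$. Apply Chevalley to $f_\alpha$ restricted to the open set $\{c\neq 0\}$ (still dominant since $c\neq 0$ in the domain); its image is again cofinite, and any $y$ there which is not a root of unity gives what you need in one stroke. In positive characteristic, however, ``the analogous argument with target $\overline{\F_p(t)}$'' needs unpacking: the direct analogue over $\F_p$ fails because every element of $\overline{\F_p}^{\,\times}$ is a root of unity. One must first base-change $X$ to $\F_p(t)$ (noting $\alpha$ remains transcendental over $\F_p(t)$, so $f_\alpha$ stays dominant) and then run Chevalley over $\F_p(t)$ to get a cofinite image in $\overline{\F_p(t)}^{\,\times}$ containing, say, $t$; equivalently and more directly, since $\alpha$ is transcendental over $\F_p$ one may simply regard $R\otimes_{\F_p[\alpha]}\F_p(\alpha)$ as a finitely generated $\F_p(\alpha)$-algebra and take an $\overline{\F_p(\alpha)}$-point avoiding $\{c=0\}$. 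Either way the image lands in a finite extension of $\F_p(t)$. The paper's approach sidesteps this case split by choosing $K=\F_p(x)$ at the outset, which is arguably cleaner; your approach has the virtue of being more geometric and avoids the finiteness result Lemma~\ref{fira}.
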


\begin{proof} See Section \ref{pasp}.
\end{proof}

To prove Theorem \ref{solvable}, we also need the following elementary lemma, whose proof is postponed to Section \ref{aux}.

\begin{lemma}\label{straight}
Let $G$ be a finitely generated group and $H$ either a quotient or a finite index subgroup of $G$. If $H$ has exponential conjugacy growth (resp. uniform exponential conjugacy growth), then so does $G$.
\end{lemma}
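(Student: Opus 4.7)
The lemma splits into two cases, which I would handle separately.

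\textit{Quotient case.} Let $\pi\colon G \twoheadrightarrow H$ be the projection and let $\Sigma$ be any finite symmetric generating set of $G$ containing $1$. Then $\pi(\Sigma)$ is such a generating set of $H$, and since a conjugation in $G$ descends to a conjugation in $H$, any elements of $\Sigma^n$ that map to pairwise non-$H$-conjugate elements are automatically pairwise non-$G$-conjugate. Thus $c_{G,\Sigma}(n) \geq c_{H,\pi(\Sigma)}(n)$ and so $C_{G,\Sigma} \geq C_{H,\pi(\Sigma)}$, from which both the exponential and uniform exponential versions follow immediately.

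\textit{Finite index case.} Let $d = [G:H]$ and fix a finite symmetric generating set $\Sigma$ of $G$ with $1 \in \Sigma$. I would pick right coset representatives $S$ for $H\backslash G$, with $1 \in S$, by reading off path-representatives from a spanning tree of the Schreier coset graph of $(G, H, \Sigma)$; this guarantees that every $s \in S$ has $\Sigma$-word length at most $d-1$. The Reidemeister--Schreier set
\[
\Sigma_H \;:=\; H \cap \{\, s\sigma s'^{-1} : s, s' \in S,\ \sigma \in \Sigma\,\}
\]
is then a finite symmetric generating set of $H$ containing $1$, with $\Sigma_H \subseteq \Sigma^{2d-1}$, so that $\Sigma_H^n \subseteq \Sigma^{(2d-1)n}$. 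The second ingredient is the standard class-splitting bound: decomposing $x = hs$ with $h \in H$, $s \in S$, any conjugate $xgx^{-1}$ of $g \in H$ that still lies in $H$ is $H$-conjugate to $sgs^{-1}$, so a single $G$-conjugacy class meets $H$ in at most $|S| = d$ distinct $H$-classes. Combining these, the $c_{H,\Sigma_H}(n)$ distinct $H$-classes meeting $\Sigma_H^n$ produce at least $c_{H,\Sigma_H}(n)/d$ distinct $G$-classes meeting $\Sigma^{(2d-1)n}$; taking $\log$, dividing by $(2d-1)n$, and passing to the liminf yields
\[
C_{G,\Sigma} \;\geq\; \frac{C_{H,\Sigma_H}}{2d-1}.
\]

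\textit{Main obstacle.} Nothing in the argument is genuinely deep. The one point that requires attention is that, to get the uniform version, the bound on the length of coset representatives must be uniform in $\Sigma$: this is exactly what the spanning-tree construction provides (diameter at most $d-1$ on $d$ vertices), so that applying the uniform hypothesis $C_{H,\Sigma'} \geq c > 0$ of the lemma to $\Sigma' = \Sigma_H$ gives $C_{G,\Sigma} \geq c/(2d-1)$ independently of $\Sigma$.
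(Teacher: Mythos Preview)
Your proof is correct and follows essentially the same approach as the paper: the quotient case is identical, and for the finite-index case both you and the paper use that $\Sigma^{2d-1}$ contains a generating set of $H$ together with the fact that a single $G$-class meets $H$ in at most $d$ $H$-classes, yielding the same inequality (the paper phrases it as $c_{G,\Sigma}(n)\ge \frac{1}{d}c_{H,\Sigma'}(n/(2d-1))$). You are simply more explicit about the Schreier-tree construction and the class-splitting argument, whereas the paper cites references for the former and states the latter without detail.
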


\begin{proof} See Section \ref{aux}.
\end{proof}

Theorem \ref{solvable} will follow easily from:

\begin{proposition}\label{unipotent} Let $K$ be a global field. Let $G$ be a finitely generated subgroup of $\A(K)$ which is not virtually nilpotent. Let $r$ be the rank (as an abelian group) of the image $\Gamma$ of $G$ inside $K^\times$ and $\Sigma$ a finite symmetric generating subset of $G$ containing $\id$. Then, the intersection of $\Sigma^n$ with any unipotent conjugacy class of $G$ has cardinality $O_{\Sigma}(n^r)$ (uniformly over the conjugacy class).
\end{proposition}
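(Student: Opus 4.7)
My approach is to parametrize the unipotent conjugacy classes of $G$ explicitly and then reduce the counting problem to a lattice-point count in the logarithmic embedding of $\Gamma\subset K^{\times}$. A direct computation in $\A(K)$ gives
\[
u(\gamma,\delta)\,u(1,b_0)\,u(\gamma,\delta)^{-1}=u(1,\gamma b_0),
\]
so for $b_0\neq 0$ the conjugacy class of $u(1,b_0)$ in $G$ is in bijection with $\Gamma$ via $\gamma\mapsto u(1,\gamma b_0)$. The problem therefore reduces to showing that
\[
E_n(b_0)=\bigl\{\gamma\in\Gamma:\ u(1,\gamma b_0)\in\Sigma^n\bigr\}
\]
has cardinality $O_\Sigma(n^r)$ with implicit constant independent of $b_0$.

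First I would choose a finite set $S$ of places of $K$ containing every archimedean place together with every place at which some generator $\sigma_i=u(a_i,b_i)\in\Sigma$ has $|a_i|_v\neq 1$ or $v(b_i)<0$. Then $\Gamma\subset\O_S^{\times}$ and every element of $G$ has additive part in $\O_S$; in particular $b_0\in\O_S$. Writing a word $u(1,\gamma b_0)=\sigma_{i_1}\cdots\sigma_{i_n}$ and unrolling gives
\[
\gamma b_0=\sum_{j=1}^n \alpha_{j-1}\,b_{i_j},\qquad \alpha_k:=a_{i_1}\cdots a_{i_k},
\]
from which a routine place-by-place estimate using $|\alpha_k|_v\le A_v^n$ with $A_v:=\max_i\max(|a_i|_v,1)$ yields the one-sided upper bound $\log|\gamma b_0|_v\le Cn$ at each $v\in S$, with $C=C(\Sigma)$; for $v\notin S$ one has $|\gamma b_0|_v\le 1$ automatically.

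The main step, and what I expect to be the principal obstacle, is converting these one-sided bounds into a two-sided estimate that is \emph{uniform in $b_0$}. Here the product formula $\sum_v \log|\gamma b_0|_v=0$, combined with the $S$-integrality of $\gamma b_0$, forces $\sum_{v\in S}\log|\gamma b_0|_v\ge 0$; feeding the one-sided upper bound back into this inequality produces a matching lower bound $\log|\gamma b_0|_v\ge -C'n$ at each $v\in S$. Subtracting $\log|b_0|_v$ shows that the vector $(\log|\gamma|_v)_{v\in S}$ lies in an $\ell^\infty$-box of side $O(n)$ centered at the $b_0$-dependent point $-(\log|b_0|_v)_{v\in S}$. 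Finally, by Dirichlet's $S$-unit theorem and the finiteness of the roots of unity of $K$, the logarithmic embedding identifies $\Gamma$ with a rank-$r$ lattice in $\R^{|S|}$ up to a finite kernel; since a rank-$r$ lattice in Euclidean space meets any ball of radius $R$ in $O(R^r)$ points uniformly in the position of the center, we conclude $|E_n(b_0)|=O_\Sigma(n^r)$ with no dependence on $b_0$.
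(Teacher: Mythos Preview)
Your proof is correct and follows essentially the same route as the paper: parametrize the conjugacy class by $\Gamma$, control the additive part via the embedding into a product of completions, use the product formula to convert one-sided bounds into two-sided ones, and finish with a rank-$r$ lattice-point count. The only cosmetic difference is that the paper, instead of fixing $b_0$ and counting in a shifted box, takes two elements $u(1,b),u(1,b')\in\Sigma^n$ in the same class and bounds $\alpha(\gamma)=\alpha(b' b^{-1})\le\alpha(b)^{d-1}\alpha(b')\le\exp(dCn)$ directly, thereby centering the count at the origin and avoiding the translation-invariance remark.
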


Since $G$ is finitely generated, the matrix entries of its elements lie in a finitely generated subring $A$ of $K$. Such a ring $A$ can be embedded (diagonally) as a discrete subring into a finite product $\prod_{i=1}^d\mathbf{K}_i$ of local fields (Archimedean or not), so that the product formula is satisfied, namely $\prod\alpha_i(a)=1$ for all $a\in A^\times$, where $\alpha_i(x)$ is the absolute value of $x \in \mathbf{K}_i$, and $A^\times$ the group of invertible elements in $A$.

Setting $\alpha(a)= \max_i \alpha_i(a)$ we see from the product formula that $\alpha(a^{-1}) \leq \alpha(a)^{d-1}$ for all $a \in A^{\times}$.

The map $\prod_{i=1}^d\mathbf{K}_i^{\times} \rightarrow \R^d, (x_i)_i \mapsto (\log(\alpha(x_i))_i$ is clearly a proper map, and hence so is the group homomorphism $\Gamma \rightarrow \R^d, \gamma \mapsto (\log(\alpha_i(\gamma)))_i$, where $\Gamma$ is endowed with the discrete topology. Its image is a discrete subgroup of $\R^d$ of rank at most $r=\rk(\Gamma)$ and consequently, the number of elements $\gamma \in \Gamma$ with $\alpha(\gamma) \leq \exp(n)$ is $O(n^r)$.

\begin{proof}[{\bf Proof of Proposition \ref{unipotent}}]

The image $\Gamma$ of $G$ in $K^{\times}$ lies in $A^{\times}$.
Since the conjugation action on the set of unipotent elements factors through $A^{\times}$, we see that two elements $u(1,b)$ and $u(1,b')$ are conjugate in $G$ if and only if there exists $\gamma\in\Gamma$ such that $b'=\gamma b$.

It is easy to check that there exists some $C=C_{\Sigma}>0$ such that if $u(a,b)\in\Sigma^n$, then $\alpha(b)\le\exp(Cn)$. On the other hand $\alpha(\gamma) \leq \alpha(b^{-1})\alpha(b') \leq \alpha(b)^{d-1}\alpha(b') \leq \exp(dCn)$. According to the remark preceding the proof of the proposition, there are only $O(n^r)$ such elements in $\Gamma$. We are done.
\end{proof}

\begin{proof}[{\bf Proof of Theorem \ref{solvable}}]
Applying Groves's theorem combined with Proposition \ref{global} and Lemma \ref{straight}, we may assume that $G$ is a finitely generated non-virtually-nilpotent subgroup of $\A(K)$, where $K$ is a global field. Let $\Sigma$ be a finite symmetric generating subset of $G$ containing $\id$.

Applying Osin's theorem on uniform exponential growth, or rather the more precise Theorem 1.3 of \cite{breuillard}, we know that $\Sigma^n$ contains at least $(1+\eps)^n$ elements, where $\eps$ is positive and independent of the choice of $\Sigma$. On the other hand, the image $\Gamma$ of $G$ in $K^{\times}$ is a finitely generated abelian group. Let $r$ be its rank. The size of the image of $\Sigma^{n/2}$ is therefore at most $R_{\Sigma}(n)=O_{\Sigma}(n^r)$. By the pigeonhole principle, there must be one value in $K^{\times}$ with at least $(1+\eps)^{n/2}/R_{\Sigma}(n)$ preimages in $\Sigma^{n/2}$. Given $g,h$ two such preimages $gh^{-1}$ is a unipotent element in $\Sigma^n$. This makes at least $(1+\eps)^{n/2}/R_{\Sigma}(n)$ unipotent elements in $\Sigma^n$.

Applying Proposition \ref{unipotent}, there are at most $O_{\Sigma}(n^r)$ elements of $\Sigma^n$ in every given unipotent conjugacy class of $G$. Hence for all $n$ large enough (i.e. $\geq n_0(\Sigma)$), one can find at least $(1+\frac{\eps}{2})^{n/2}$ different conjugacy classes in $\Sigma^n$. This ends the proof.
\end{proof}

\section{General facts regarding conjugacy growth}\label{aux}

We prove here Lemma \ref{straight}. In fact, it immediately follows from the following more general result. If $r\ge 0$ is real, define $c_{G,\Sigma}(r)=c_{G,\Sigma}(\lfloor r\rfloor)$, which is the maximal number of non-$G$-conjugate elements in the $r$-ball in $G$.

\begin{lemma}\label{straighte}

1) Let $G$ be a finitely generated group and $H$ a quotient of $G$. Let $\Sigma$ be a generating subset (finite symmetric and containg $\id$), and $\Sigma'$ the image of $\Sigma$ into $H$.
Then for all $n$ we have $c_{G,\Sigma}(n)\ge c_{H,\Sigma'}(n)$.

2) Let $G$ be a finitely generated group and $H$ a finite index subgroup of index $k$. Let $\Sigma$ be a generating subset (finite symmetric and containing $\id$). Then there exists a generating subset $\Sigma'$ of $H$ (finite symmetric and containing $\id$), such that, for all $n$ we have
$$c_{G,\Sigma}(n)\ge \frac{1}{k}c_{H,\Sigma'}\left(\frac{n}{2k+1}\right).$$
\end{lemma}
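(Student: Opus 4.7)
The plan is to treat the two parts separately; the first is a direct lifting argument, while the second requires a Reidemeister--Schreier-style construction of $\Sigma'$ together with a counting bound.

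For part 1, given $h_1,\ldots,h_N$ pairwise non-$H$-conjugate in the $\Sigma'$-ball of radius $n$ in $H$, I would express each $h_i$ as a word of length at most $n$ in $\Sigma'$ and read the same word in $\Sigma$, obtaining a preimage $g_i\in G$ of $\Sigma$-length at most $n$. If $g_i$ and $g_j$ were $G$-conjugate, their images in $H$ would be $H$-conjugate, a contradiction; hence $c_{G,\Sigma}(n)\geq c_{H,\Sigma'}(n)$.

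For part 2, first I would choose a right transversal $T=\{t_1=1,t_2,\ldots,t_k\}$ for $H$ in $G$ with each $|t_i|_\Sigma\le k$, which exists by taking a BFS spanning tree on the Schreier coset graph (a connected graph on $k$ vertices). Setting $\Sigma':=(T\Sigma T^{-1})\cap H$ gives a finite symmetric generating set of $H$ containing $\id$, whose elements have $\Sigma$-length at most $2k+1$. Moreover, the classical rewriting $s_1\cdots s_n = \prod_{i=1}^n (c_{i-1}\, s_i\, c_i^{-1})$, where $c_i\in T$ denotes the representative of the coset of $s_1\cdots s_i$ and $c_0=c_n=1$ whenever $s_1\cdots s_n\in H$, shows that $|h|_{\Sigma'}\leq |h|_\Sigma$ for every $h\in H$. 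Consequently the $\Sigma'$-ball of radius $n/(2k+1)$ in $H$ is contained in the $\Sigma$-ball of radius $n$ in $G$.

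The key counting step is: for every $G$-conjugacy class $C$, the intersection $C\cap H$ consists of at most $k$ distinct $H$-conjugacy classes. Indeed, fix $h\in C\cap H$ and let $h'=ghg^{-1}\in C\cap H$; writing $g=h_0 t_i$ uniquely with $h_0\in H$ and $t_i\in T$, we get $h'=h_0\,(t_i h t_i^{-1})\,h_0^{-1}$, so $h'$ is $H$-conjugate to $t_i h t_i^{-1}$ (which must therefore lie in $H$). Thus the $H$-classes in $C\cap H$ are indexed by a subset of $T$, giving at most $k$ of them. Applying this to $N:=c_{H,\Sigma'}(n/(2k+1))$ pairwise non-$H$-conjugate elements in the $\Sigma'$-ball of radius $n/(2k+1)$ in $H$ yields at least $N/k$ distinct $G$-conjugacy classes inside $\Sigma^n$, which is the desired inequality. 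The only mildly delicate point is the counting claim, where the index $k$ enters the bound; the rest is bookkeeping with word lengths.
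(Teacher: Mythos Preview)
Your proof is correct and follows the same route as the paper, which leaves part~1 to the reader and for part~2 simply cites a reference for the fact that $\Sigma^{2k-1}$ contains a symmetric generating set $\Sigma'$ of $H$, then invokes the same pigeonhole observation that a $G$-class meets $H$ in at most $k$ $H$-classes; you have made both steps explicit via the Schreier construction and the transversal argument. One minor remark: the rewriting inequality $|h|_{\Sigma'}\le |h|_\Sigma$ is true but superfluous here---the ball inclusion you need follows directly from $\Sigma'\subset\Sigma^{2k+1}$, which you already noted.
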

\begin{proof}
The assertion for $H$ a quotient of $G$ is left to the reader.

If $H$ has index $k$ in $G$, it is well-known (see e.g. \cite[Lemma C.1.]{bgt} for a short proof or deduce it from \cite[Lemma~2.2 and Theorem~2.7]{MKS}) that
$\Sigma^{2k-1}$ contains a symmetric generating subset $\Sigma'$ of $H$. So $\Sigma^n$ contains the $(n/(2k-1))$-ball of $(H,\Sigma')$. In particular, it contains a subset $Y$ consisting of $c_{H,\Sigma'}(n/(2k-1))$ pairwise non-$H$-conjugate elements. Now it is clear that the maximal number of pairwise $G$-conjugate elements in $Y$ is $k$, so $Y$ contains a subset of at least $c_{H,\Sigma'}(n/(2k-1))/k$ pairwise non-$G$-conjugate elements.
\end{proof}

\begin{problem}\label{open}Prove (or disprove) that the converse of Lemma \ref{straight} does not hold, i.e.\ that exponential conjugacy growth is {\it not} inherited by finite index subgroups.
\end{problem}

\section{The specialization argument}\label{pasp} 

In this section, we prove Proposition \ref{global}. The proof relies on the following specialization argument.

\begin{lemma}\label{spero}
Let $A$ be a finitely generated commutative ring with no zero divisors. Suppose that $x,y\in A$, $x$ is not a root of unity and $y\neq 0$. Then there exists a global field $L$ and a ring homomorphism $\phi:A\to L$ such that $\phi(x)$ is not a root of unity and $\phi(y)\neq 0$.
\end{lemma}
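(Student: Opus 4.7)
The plan is to first reduce to the case where $y$ is a unit by replacing $A$ with the finitely generated integral domain $A[y^{-1}]$; thereafter any homomorphism $\phi$ satisfies $\phi(y)\neq 0$ automatically, and it remains to find $\phi:A\to L$ to a global field sending $x$ to a non-root-of-unity. We then split on $\operatorname{char}(A)$.

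In characteristic zero, we work with $A_\Q := A\otimes_\Z\Q$, a finitely generated $\Q$-algebra and integral domain. If $x$ is algebraic over $\Q$, its minimal polynomial $P$ is irreducible and non-cyclotomic, so for any maximal ideal $\mathfrak{m}$ of $A_\Q$ the Nullstellensatz for Jacobson rings yields a number field $L=A_\Q/\mathfrak{m}$ in which the image of $x$ is a root of $P$, hence not a root of unity. If instead $x$ is transcendental over $\Q$, the morphism $\operatorname{Spec}(A_\Q)\to\operatorname{Spec}\Q[T]$ induced by $T\mapsto x$ is dominant, so by Chevalley's theorem its image is a dense constructible, hence cofinite, subset of the affine line. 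Thus $A_\Q/(x-a)$ is nonzero for all but finitely many $a\in\Z$; we pick such an $a\notin\{-1,0,1\}$ and quotient further by a maximal ideal to land in a number field $L$ with $\phi(x)=a$, which is not a root of unity.

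In positive characteristic $p$, the element $x$ is forced to be transcendental over $\F_p$, for otherwise $x$ would lie in $\overline{\F}_p\subset A$, whose nonzero elements are all roots of unity. Thus $\F_p[x]\hookrightarrow A$ and $A$ is finitely generated as an $\F_p[x]$-algebra, so the localization $B:=A\otimes_{\F_p[x]}\F_p(x)$ is a finitely generated $\F_p(x)$-algebra and integral domain. By the Nullstellensatz over the field $\F_p(x)$, any maximal ideal $\mathfrak{m}$ of $B$ gives a finite extension $L=B/\mathfrak{m}$ of $\F_p(x)$---a global field---and the embedding $\F_p(x)\hookrightarrow L$ keeps the image of $x$ transcendental over $\F_p$, hence not a root of unity.

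The main obstacle is to design the right specialization in each characteristic. In characteristic zero a global field is a number field, so we must kill the transcendence of $x$; when $x$ is transcendental this is achieved by specializing $x\mapsto a$ to a suitable integer, with Chevalley's theorem ensuring the fiber $A_\Q/(x-a)$ is nonempty. In characteristic $p$ a global field retains residual transcendence over $\F_p$, so the strategy reverses: we must \emph{preserve} the transcendence of $x$ throughout, which is engineered by inverting all nonzero polynomials in $x$ before passing to any maximal ideal.
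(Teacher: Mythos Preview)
Your proof is correct, but it takes a genuinely different route from the paper's. The paper proceeds uniformly in both characteristics: it tensors with $K=\Q$ or $K=\F_p(x)$, applies Noether normalization to write $A'$ as integral over a polynomial ring $B=K[t_1,\dots,t_n]$, and then combines the two constraints into a single nonvanishing requirement by setting $u=(x^\ell-1)y$, where $\ell$ is the lcm of the orders of all roots of unity in $\hat K$ of degree at most $\deg_B(x)$ (finiteness coming from a separate lemma on finitely generated fields). A specialization lemma (avoiding the zero locus of the constant term of the integral equation for $u$) then yields $\phi:A'\to\hat K$ with $\phi(u)\neq 0$ and with $\phi(x)$ of bounded degree over $K$, which forces $\phi(x)^\ell\neq 1$ and hence $\phi(x)$ is not a root of unity.

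By contrast, you dispose of $y$ immediately via localization, and then treat the two characteristics asymmetrically: in characteristic~$p$ you localize at $\F_p[x]\setminus\{0\}$ (this step is shared with the paper) and invoke the Nullstellensatz directly, bypassing Noether normalization entirely; in characteristic~$0$ you case-split on whether $x$ is algebraic over $\Q$, and in the transcendental case you appeal to Chevalley's constructibility theorem to specialize $x$ to an integer outside $\{-1,0,1\}$. Your argument is arguably more hands-on and avoids the auxiliary lemmas on bounded-degree roots of unity, at the cost of importing Chevalley's theorem and a case distinction in characteristic~$0$; the paper's argument is more uniform and self-contained but requires setting up Noether normalization and the degree-bounding mechanism.
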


First we explain how to deduce Proposition \ref{global} from the last lemma, and then proceed to the proof of that lemma. We shall need the following observation:

\begin{lemma} Let $\Gamma$ be a subgroup of $\mathbf{Aff}(K)$. Then $\Gamma$ is virtually nilpotent if and only if the image of $\Gamma$ in the multiplicative group $K^\times$ is finite or $\Gamma\cap K=\{0\}$.\label{vn}
\end{lemma}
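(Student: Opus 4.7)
The plan is to exploit the split extension
\[
0\to (K,+)\to \A(K)\to K^\times\to 1,
\]
in which the translation subgroup $\{u(1,b):b\in K\}$ intersects $\Gamma$ in $\Gamma\cap K$ and the quotient projection sends $\Gamma$ onto its image in $K^\times$. The ``if'' direction is immediate: if the image in $K^\times$ is finite, its kernel is a finite-index subgroup of $\Gamma$ lying in the abelian translation subgroup, so $\Gamma$ is virtually abelian; and if $\Gamma\cap K=\{0\}$, then $\Gamma$ injects into $K^\times$ and is itself abelian.

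For the converse I would argue by contrapositive. The central computation is that for $g=u(a,c_0)$ and $t=u(1,b)$ in $\A(K)$ one has $gtg^{-1}=u(1,ab)$, hence $[g,t]=u(1,(a-1)b)$, and by induction the $k$-fold iterated commutator satisfies
\[
[g,[g,\dots,[g,t]\dots]]=u(1,(a-1)^k b) \qquad (k\text{ copies of }g).
\]
Now assume $\Gamma$ is virtually nilpotent and fix a nilpotent normal subgroup $\Gamma_0\leq\Gamma$ of finite index and nilpotency class $c$. Suppose for contradiction that the image of $\Gamma$ in $K^\times$ is infinite and that $\Gamma\cap K\neq\{0\}$. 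Picking $t=u(1,b)\in\Gamma$ with $b\neq 0$ and $g\in\Gamma$ whose image $a\in K^\times$ has infinite order, the conjugates $g^n t g^{-n}=u(1,a^n b)$ for $n\in\Z$ are pairwise distinct elements of $\Gamma\cap K$, so $\Gamma\cap K$ is in fact \emph{infinite}. Consequently $\Gamma_0\cap K$, being of finite index in $\Gamma\cap K$, is also infinite, and in particular contains some $t'=u(1,b')$ with $b'\neq 0$; likewise the image of $\Gamma_0$ in $K^\times$ has finite index in that of $\Gamma$, hence is infinite, so some $g'=u(a',c')\in\Gamma_0$ has $a'$ of infinite order.

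Applying the iterated-commutator formula inside $\Gamma_0$ with $k=c$ then gives $u(1,(a'-1)^c b')=1$, i.e.\ $(a'-1)^c b'=0$ in $K$. But $a'\neq 1$, $b'\neq 0$, and $K$ has no zero divisors: contradiction. The only mildly delicate point, and the step I expect to be the main potential pitfall, is the preliminary observation that $\Gamma\cap K$ must be infinite, which is what guarantees that after intersecting with $\Gamma_0$ a nonzero translation survives; without that step, in positive characteristic a finite $\Gamma\cap K$ could \emph{a priori} be annihilated by passage to a finite-index subgroup.
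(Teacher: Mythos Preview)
Your proof is correct. Both you and the paper start by establishing the same preliminary fact: if $\Gamma\cap K$ is nonzero and the image in $K^\times$ is infinite, then $\Gamma\cap K$ is infinite (via conjugation), and hence so is its intersection with any finite-index subgroup. After this point the arguments diverge. The paper finishes structurally: since every centralizer in $\A(K)$ of a nontrivial element is abelian and any nontrivial nilpotent group has nontrivial center, every nilpotent subgroup of $\A(K)$ is abelian; combined with the fact that the centralizer of a nonzero element of $K$ lies in $K$, this forces the finite-index nilpotent subgroup $\Lambda$ to be contained in $K$, contradicting the infinite image in $K^\times$. You instead compute the iterated commutator $[g',[g',\dots,[g',t']\dots]]=u(1,(a'-1)^k b')$ directly and use that nilpotency of class $c$ makes it vanish for $k=c$, contradicting $(a'-1)^c b'\neq 0$ in the field $K$. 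Your approach is more hands-on and stays entirely inside the group law of $\A(K)$; the paper's argument is slightly slicker and yields the extra structural fact that nilpotent subgroups of $\A(K)$ are abelian, but either route settles the lemma equally well.
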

 
\begin{proof}
If one of the two conditions holds, clearly $\Gamma$ is virtually abelian.

Conversely, suppose that $\Gamma$ has a finite index nilpotent subgroup $\Lambda$ and that $\Gamma\cap K$ is nontrivial. Suppose by contradiction that
the image of $\Gamma$ in $K^\times$ is infinite. Then the conjugates of any nontrivial element in $\Gamma \cap K$ give at least as many elements as there are in the image of $\Gamma$ in $K^{\times}$. Hence $\Gamma\cap K$ and hence $\Lambda \cap K$ is infinite as well.
On the other hand, since the centralizer in $\mathbf{Aff}(K)$ of any nontrivial element is abelian, and any nontrivial nilpotent group has a nontrivial center, we deduce that any nilpotent subgroup of $\mathbf{Aff}(K)$ is abelian; in particular, $\Lambda$ is abelian; since moreover the centralizer in $\mathbf{Aff}(K)$ of any nontrivial element of $K$ is contained in $K$, we deduce that $\Lambda\subset K$. So the image of $\Gamma$ in $K^\times$ is finite, a contradiction.
\end{proof}

\begin{proof}[Proof of Proposition \ref{global}]
Let $A$ be the ring generated by matrix entries of $G$. Let $x\in A^\times$ (the multiplicative group of invertible elements in $A$) be an element of infinite order in the projection of $G$ on $K^\times$, and let $u\in A$ be a nontrivial element in $G\cap K$. By Lemma \ref{spero}, there exists a homomorphism from $A$ to a global field $L$, so that the image of $x$ is not a root of unity and the image of $u$ is nonzero. By Lemma \ref{vn}, the image of $G$ in $\A(L)$ is not virtually nilpotent.
\end{proof}

We now begin the proof of Lemma \ref{spero} with the following observation.

\begin{lemma}\label{spede}Let $K$ be an infinite field, $\hat{K}$ an algebraic closure, and $A$ a finitely generated commutative $K$-algebra. Let $A$ be integral over a free subring $B=K[t_1,\dots,t_d]$, and suppose that $u\in A$ has degree $\le k$ over $B$. Then there exists a $K$-algebra homomorphism $\phi:A\to\hat{K}$ mapping $B$ into $K$ and mapping $u$ to a nonzero element. In particular, if $a\in A$ has integral degree $k$ over $B$, then $\phi(a)$ has degree $\le k$ over $K$.
\end{lemma}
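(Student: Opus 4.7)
The plan is a two-stage extension. First, I produce a ring homomorphism $\phi_0 : B \to K$ at a well-chosen $K$-rational point $\lambda \in K^d$ such that the specialized integral equation for $u$ admits a nonzero root $\alpha \in \hat{K}$, and extend $\phi_0$ to $\phi_1 : B[u] \to \hat{K}$ by sending $u \mapsto \alpha$. Second, I extend $\phi_1$ across the integral extension $B[u] \subset A$ by lying-over. I work under the assumption that $A$ is a domain, which is the case relevant to the application in Lemma \ref{spero}.

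For the first stage, take the minimal polynomial $P(X) = X^m + b_{m-1}X^{m-1} + \cdots + b_0$ of $u$ over $\operatorname{Frac}(B)$. Its coefficients lie in $B$ because $B$ is a UFD, hence integrally closed, and $u$ is integral over $B$; moreover $m \leq k$ by hypothesis. The key observation is that $b_0 \neq 0$: otherwise $u \cdot (u^{m-1} + b_{m-1}u^{m-2} + \cdots + b_1) = 0$, and cancelling $u \neq 0$ in the domain $A$ produces a strictly shorter integral equation, contradicting the minimality of $P$. Now $b_0$ is a nonzero element of $K[t_1,\dots,t_d]$ and $K$ is infinite, so there exists $\lambda = (\lambda_1, \dots, \lambda_d) \in K^d$ with $b_0(\lambda) \neq 0$; define $\phi_0 : B \to K$ as the evaluation at $\lambda$. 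The specialized polynomial $P^{\phi_0} \in K[X]$ has nonzero constant term, so any root $\alpha \in \hat{K}$ of $P^{\phi_0}$ is nonzero. Using the isomorphism $B[u] \cong B[X]/(P)$, which follows from the monicity and minimality of $P$, I extend $\phi_0$ to $\phi_1 : B[u] \to \hat{K}$ by $u \mapsto \alpha$.

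For the second stage, $A$ is integral over $B[u]$, so by lying-over there exists a prime $\mathfrak{P} \subset A$ above $\ker \phi_1$. The image $\phi_1(B[u]) = K[\alpha]$ is a field finite over $K$, whence $A/\mathfrak{P}$ is algebraic over it and embeds into $\hat{K}$ extending $K[\alpha] \hookrightarrow \hat{K}$ by algebraic closedness of $\hat{K}$. Composing yields the desired $\phi : A \to \hat{K}$ with $\phi|_B = \phi_0$ and $\phi(u) = \alpha \neq 0$. The final assertion is then immediate: a monic degree-$k$ equation over $B$ satisfied by any $a \in A$ specializes under $\phi$ to a monic degree-$k$ polynomial in $K[X]$ satisfied by $\phi(a)$, so $\phi(a)$ has degree at most $k$ over $K$.

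The main obstacle is ensuring $b_0 \neq 0$ in the first stage; this is precisely where the domain hypothesis and minimality of $P$ are used, and it is what guarantees that the specialization does not trivially collapse $u$ to zero. The rest relies on the infinitude of $K$ to locate the $K$-point and on standard properties of integral extensions together with the algebraic closedness of $\hat{K}$.
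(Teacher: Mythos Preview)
Your proof is correct and follows essentially the same strategy as the paper: pick a $K$-point where the constant term of an integral equation for $u$ does not vanish, then lift across the integral extension into $\hat{K}$. The only difference is cosmetic---the paper lifts in a single step from $B$ to $A$ via the lifting theorem and deduces $\phi(u)\neq 0$ a posteriori from the specialized equation, whereas you pass through $B[u]$ first---and you are more careful to make explicit the domain hypothesis needed for $b_0\neq 0$, which the paper leaves tacit.
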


\begin{proof}
We have $u^k+p_{k-1}u^{k-1}+\dots+p_0=0$, where $p_i\in K[t_1,\dots,t_d]$. Since $K$ is infinite, there exists $(u_1,\dots,u_d)\in K^d$ such that $p_0(u_1,\dots,u_d)\neq 0$. By the lifting theorem for integral extensions (see \cite[VII.3.1]{lang2}), the $K$-algebra homomorphism $K[t_1,\dots,t_d]\to K$ mapping $t_i$ to $u_i$ extends to a homomorphism $\phi:x\mapsto\bar{x}$ from $A$ to the algebraic closure of $K$.
We have
$$\bar{u}(\bar{u}^{k-1}+\phi(p_{k-1})\bar{u}^{k-2}+\dots+\phi(p_1))=-\phi(p_0),$$
so $\bar{u}\neq 0$. The last assertion is clear since $\phi(B)\subset K$.
\end{proof}

\begin{lemma}\label{fira}
Let $K$ be a finitely generated field and $\hat{K}$ an algebraic closure. Then for all $k$, the set of roots of unity in $\hat{K}$ having degree $\le k$ over $K$, is finite.
\end{lemma}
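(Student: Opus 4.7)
The plan is to reduce the finiteness question to the classical case of a number field or finite field by introducing $K_0$, the algebraic closure of the prime field inside $K$. Write $F$ for the prime field of $K$ (so $F=\Q$ or $F=\F_p$) and set $K_0 = K\cap \overline{F}$, viewing both inside $\hat{K}$.

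The first step is to show that $K_0$ is a finite extension of $F$. Since $K$ is finitely generated over $F$, it is a finite extension of a purely transcendental subextension $F(t_1,\dots,t_d)$; the algebraic closure of $F$ in $F(t_1,\dots,t_d)$ equals $F$ itself, so any element of $K_0$ has degree over $F$ at most $n:=[K:F(t_1,\dots,t_d)]$. Since $F$ is perfect, the primitive element theorem upgrades this uniform bound to $[K_0:F]\le n$, and so $K_0$ is either a number field (when $\mathrm{char}(K)=0$) or a finite field $\F_{p^m}$ (when $\mathrm{char}(K)=p$).

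The second and key step is the observation that the minimal polynomial $f$ of any root of unity $\zeta\in\hat{K}$ over $K$ already lies in $K_0[x]$: if $\zeta$ has order $n$, then $f$ divides $x^n-1$ in $K[x]$, so every root of $f$ is a root of unity, and the coefficients of $f$---elementary symmetric polynomials in those roots---are algebraic over $F$ while lying in $K$, hence in $K_0$. Therefore $[K(\zeta):K]=[K_0(\zeta):K_0]$, reducing the problem to bounding $n$ when $[K_0(\zeta):K_0]\le k$. In characteristic zero, $\phi(n)=[\Q(\zeta):\Q]\le[K_0:\Q]\cdot[K_0(\zeta):K_0]\le k[K_0:\Q]$, which bounds $n$. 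In characteristic $p$ we have $\gcd(n,p)=1$ and $[K_0(\zeta):K_0]$ equals the multiplicative order of $p^m$ modulo $n$; if this is $\le k$ then $n$ divides $p^{mk}-1$, again bounding $n$. In either case only finitely many orders $n$ are possible, yielding the lemma. I do not expect any real obstacle: the only non-routine ingredient is the descent of the minimal polynomial to $K_0[x]$, after which the problem reduces to standard cyclotomic facts.
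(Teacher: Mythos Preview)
Your argument is correct and follows essentially the same route as the paper: both reduce the question to a finite extension of the prime field by using that the minimal polynomial of a root of unity over $K$ has coefficients algebraic over the prime field (the paper passes through the intermediate purely transcendental subfield $F(t_1,\dots,t_d)$, while you go directly to $K_0=K\cap\overline{F}$, but the substance is the same). One small slip: in characteristic $p$, from the multiplicative order $j$ of $p^m$ modulo $n$ being at most $k$ you get $n\mid p^{mj}-1$, hence $n\le p^{mk}-1$, but not in general $n\mid p^{mk}-1$ (that would require $j\mid k$); the bound on $n$ is unaffected.
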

\begin{proof}This is very classical. Clearly, if $K_0\subset K$ is a finite extension, it is enough to prove the result for $K_0$. Taking $K_0$ to be a purely transcendental extension of $F=\mathbf{F}_p$ or $\mathbf{Q}$ \cite[VIII.1]{lang2}, we can therefore suppose that $K=F(t_1,\dots,t_n)$. Now if an element has degree $\le k$ over $F(t_1,\dots,t_n)$ and is algebraic over $F$, then it has degree $\le k$ over $F$ (see the proof of \cite[V.\S 11.7, Cor.~1]{Bou}). This reduces to the case when $K=F$; if $F$ is finite this is trivial since there are finitely many elements of each degree in $\hat{F}$; if $F=\mathbf{Q}$ this follows from the fact that the degree of a primitive $n$th root of unity is given by Euler's totient function $\varphi(n)$ (see \cite[V.\S 11.4]{Bou}).
\end{proof}

%%%%%%%%%%%%%%%%%%%%%%%%% NE PAS EFFACER
%Dans la preuve en question, il montre que si K<K(B) est une ext purement tran, et K(B)<L est algebrique, et L' est la cloture alg de K dans L, alors [K:L'] est majore par [K(B):L]
%%%%%%%%%%%%%%%%%%%%%%%%%

\begin{proof}[{\bf Proof of Lemma \ref{spero}}]
Define $(R,K)=(\mathbf{Z},\mathbf{Q})$ if $A$ has zero characteristic, and $(R,K)=(\mathbf{F}_p[x],\mathbf{F}_p(x))$ if $A$ has characteristic $p>0$ (in which case $x$ is by hypothesis transcendental over $\mathbf{F}_p$). Then $A$ embeds into $A'=A\otimes_R K$, which is the localization of $A$ with respect to $R-\{0\}$. Let $\hat{K}$ be an algebraic closure of $K$.

By the Noether normalization theorem (\cite[VIII.2.1]{lang2}), $A'$ is a finite integral extension of some $B:=K[t_1,\dots,t_n]$, where $t_1,...,t_n$ are algebraically independent indeterminates.
Let $k$ be the minimal degree of a monic polynomial over $B$ vanishing at $x$. By Lemma \ref{fira}, there are only finitely many roots of unity in $\hat{K}$ of degree at most $k$ over $K$. Let $\ell$ be the least common multiple of their orders.

Set $u=(x^\ell-1)y$. By Lemma \ref{spede}, there exists a $K$-algebra homomorphism $\phi:A'\to\hat{K}$ such that $\phi(u)\neq 0$ and $\phi(B)\subset K$. Then $\phi(y) \neq 0$ and $\phi(x)$, which has degree at most $k$ over $K$, is not a root of unity, for otherwise its order would divide $\ell$, whilst $\phi(x)^{\ell} \neq 1$.
\end{proof}

\emph{Acknowlegdements.} It is a pleasure to thank both referees for their careful reading of the manuscript and for their thoughtful suggestions.

\setcounter{tocdepth}{1}

\end{document}